\newtheorem{theorem}{Theorem}[section]
\newtheorem{lemma}[theorem]{Lemma}
\newtheorem{coro}[theorem]{Corollary}
\theoremstyle{definition}
\newtheorem{definition}[theorem]{Definition}
\newtheorem{example}[theorem]{Example}
\theoremstyle{remark}
\numberwithin{equation}{section}
\begin{document}

\title[Bifurcation in random dynamical systems]{A sufficient condition for bifurcation in random dynamical systems}

\author[X. Chen]{Xiaopeng Chen}

\address{School of Mathematics and Statistics\\ Huazhong University of Science and
Technology\\  Wuhan 430074, China}

\email{chenxiao002214336@yahoo.cn}

\thanks{The first author would like to thank  Zhenxin Liu for helpful comments.}

\author[J. Duan]{Jinqiao Duan}
\address{Department of Applied Mathematics\\ Illinois Institute of Technology\\ Chicago, IL 60616, USA}
\email{duan@iit.edu}
\thanks{The second author was supported in part by NSF Grant 0620539, the Cheung Kong Scholars Program and the
K. C. Wong Education Foundation.}
\author[X. Fu]{Xinchu Fu}
\address{
Department of Mathematics\\ Shanghai University\\ Shanghai 200444,
China} \email{xcfu@shu.edu.cn}
\thanks{The third author was supported in part by NSFC Grant 10672146.}
\subjclass[2000]{Primary 37H20, 37B30, 60H10.}

\date{April 19 , 2009 and, in revised form, June 24, 2009.}
\commby{Yingfei Yi}

\keywords{Random dynamical systems, discrete-time  and
continuous-time dynamical systems, random homeomorphism, Conley
index, abstract bifurcation point}

\begin{abstract}
Some properties of random Conley index are obtained and then a
sufficient condition for the existence of abstract bifurcation
points for both discrete-time and continuous-time random dynamical
systems is presented. This stochastic bifurcation phenomenon is
demonstrated by a few examples.
\end{abstract}

\maketitle

\section{Introduction}
The Conley index   is a  topological tool for  investigating
dynamical systems  \cite{6,Krz,Mischaikow,Smo}. In particular, the
Conley index has been  used to detect   bifurcation   in
deterministic dynamical systems \cite{ Bar,Fu,Izy, Kap, Kap1, Kun}.
Recently, the Conley index is defined for discrete-time random
dynamical systems \cite{Liu}. In the present paper, we
 investigate     bifurcation
  for both discrete-time and continuous-time random dynamical
systems, via random Conley index.

 We first   present a
 few properties of random Conley index in \S 2.  Then in
\S 3, we introduce a concept of prime random isolated invariant
sets. Finally in \S 4, we obtain a sufficient condition for the
existence of abstract bifurcation points for random dynamical
systems.

\section{Some properties of random Conley index}

   A continuous random dynamical system (RDS) in the state space $X$,
    with the   time set $\mathbb{T}$ and the underlying probability space $(\Omega, \mathcal{F}, \mathbb{P})$,
 consists of two ingredients \cite{Arn}: \\
 \\
(i) A model of the noise, namely a driving   flow $(\theta_t)_{t\in
\mathbb{T}}$ on the sample space $\Omega$, such that
$(t,\omega)\mapsto \theta_t \omega$ is a measurable flow that leaves
$\mathbb{P}$ invariant, i.e. $\theta_t \mathbb{P} =
\mathbb{P}$ for all $t \in \mathbb{T}$. \\
(ii) A model of the system evolution influenced by noise, namely a
cocycle $\phi$ over $\theta$, i.e., a measurable mapping $\phi:
\mathbb{T} \times\Omega\times X\rightarrow  X, $
$(t,\omega,x)\mapsto \phi(t,\omega,x)$, such that $(t,x)\mapsto
\phi(t,\omega,x)$ is continuous for all $\omega \in \Omega$ and the
family $\phi(t,\omega,\cdot) = \phi(t,\omega) : X \rightarrow X$ of
random mappings  satisfies the cocycle property:
\begin{eqnarray}\qquad
\phi(0,\omega)=id_X , \; \; \phi(t+s,\omega)=\phi(t,\theta_s
\omega)\circ \phi(s,\omega) \; \mbox{ for all } t,s \in \mathbb{T},
\;   \omega\in \Omega, \label{5}
\end{eqnarray}
where $id_X$ is the identity map on the state space $X$. We usually
say $\phi$ is a random dynamical system (over $\theta$).
 In this paper,  the time set $\mathbb{T}= \mathbb{Z} $ or
$\mathbb{R}$, and the state space $X$ is a locally compact complete
metric space (e.g., $\mathbb{R}^n$). It follows from (\ref{5}) that
$\phi(t,\omega)$ is a homeomorphism of $X$ and
 $$\phi(n,\omega)^{ -1 }=\phi(-n,\theta_n\omega).$$
 When $\phi$ is a discrete random dynamical system and $\varphi$
 is the time-one map of $\phi$, i.e. $\varphi(\omega)=\phi(1,\omega):X\rightarrow
 X$, then we call $\varphi$ the random homeomorphism determined by
 $\phi$. On the other hand, if $\varphi$ is a random
 homeomorphism, then it generates via iteration a discrete RDS
 $\phi(n,\omega,x)$.  So  we identify the discrete RDS $\phi$ as  a
 random homeomorphism.

 \medskip

The concept of  topological equivalence  of RDS is adapted from the
deterministic case  \cite{Arn, Chu}.  Let $\phi_1$ and $\phi_2$ be
two RDS over the same driving flow $ \theta$, but with state spaces
$X_1$ and $X_2$ respectively. The  RDS $\phi_1$ and $\phi_2$ are
said to be \textit{topologically  equivalent} if there exists a
mapping $\alpha : \Omega\times X_1
\rightarrow X_2 $ with the following properties:\\
 (i) The mapping $x \rightarrow \alpha(\omega,
x)$ is a homeomorphism from $X_1$ onto $X_2$ for   $\omega\in
\Omega$; \\
(ii) Both mappings $\omega \rightarrow\alpha(\omega, x_1)$ and
$\omega \rightarrow \alpha^{-1}(\omega, x_2)$ are measurable for
  $x_1 \in
X_1$ and $x_2 \in X_2$; \\
(iii) The cocycles $\phi_1$ and $\phi_2$ are cohomologous, i.e.,
\begin{equation}
\phi_2(t, \omega, \alpha(\omega, x)) =
\alpha(\theta_t\omega,\phi_1(t, \omega, x))\mbox{ for }    x \in X_1
\mbox{ and } \omega \in \Omega.
\end{equation}

\medskip
  We first recall some basic definitions from \cite{Liu}.
  A random compact set $N(\omega)$ is called a \textit{random isolating
  neighborhood} if it satisfies
  $$Inv N(\omega)\subset int N(\omega),$$
  where int $N(\omega)$ denotes the interior of $N(\omega)$ and
  $$Inv N(\omega)=\{x\in N(\omega)\mid \phi(n,\omega,x)\in N(\theta_n\omega),\forall n\in \mathbb{Z}\}. $$
    We call $S(\omega)$ a \textit{random isolated invariant set}
  if there exists a  random isolating neighborhood $N(\omega)$ such
  that $S(\omega)=Inv N(\omega)$. A random compact set $N(\omega)$ is called a
 \textit{random isolating block} if it satisfies
  $\varphi(\theta_{-1}\omega,N(\theta_{-1}\omega))\bigcap N(\omega)\bigcap \varphi^{-1}(\theta\omega,N(\theta\omega))$
  $\subset int N(\omega)$.

  For a random set $N(\omega)$, we define its \textit{exit set}
to be $N^{-}(\omega):=\{x\in N(\omega)\mid \varphi(\omega,x)\notin
int N(\theta\omega)\}$.

We now   introduce the concept of a \textit{random filtration pair},
as the random counterpart of the deterministic concept \cite{Fra}.

\begin{definition} [Random filtration pair]
Assume that $N(\omega)$ is a random isolating neighborhood,
$L(\omega)\subset N(\omega)$ is a random compact set and $S(\omega)$
is a random isolated invariant set inside $N(\omega)$. Assume
further that $N(\omega)=cl(int N(\omega))$ and $L(\omega)=cl(int
L(\omega))$. We call $(N(\omega),L(\omega))$ a random filtration
pair for
$S(\omega)$ if the following conditions hold:\\
 (i) $cl(N(\omega)\setminus L(\omega))$ is a random isolating neighborhood of
 $S(\omega)$;\\
 (ii) $L(\omega)$ is a random neighborhood of $N^{-}(\omega)$ in
 $N(\omega)$; and \\
 (iii) $\varphi(\omega,L(\omega))\bigcap
 cl(N(\theta\omega) \backslash L(\theta\omega))=\emptyset$.
\end{definition}

Assume that $P = (N(\omega),L(\omega))$ is a random filtration pair
for $S(\omega)$.  Let $N_L(\omega)$ be the random quotient space(or
random pointed space) $N(\omega)/L(\omega)$, where
$N_L(\omega)=(N(\omega) \setminus L(\omega)\bigcup [L(\omega)],
[L(\omega)])$ for all $\omega\in \Omega$. If $L(\omega)=\emptyset$,
then $N_L(\omega)=N(\omega)\bigcup [\emptyset]$. A map
$\varphi_P(\omega,\cdot):N_L(\omega)\rightarrow N_L(\theta\omega)$
is called a random pointed space map associated to $P$ if
\begin{displaymath}
\varphi_P(\omega,\cdot)= \left\{ \begin{array}{ll}
[L(\theta\omega)], & x=[L(\omega)] \quad\textrm{or}\quad  \varphi(\omega,x)\notin N(\theta\omega),\\
p(\theta\omega,\varphi(\omega,x)),& \textrm{otherwise,}\\
\end{array} \right.
\end{displaymath}
where $p(\omega,\cdot): N(\omega)\rightarrow N_L(\omega)$ is the
random quotient map. Assume that $C$ and $D$ are two random pointed
spaces, and $c$ and $d$ are two \textit{Caratheodory functions}
(i.e., $c(\omega,\cdot)$, $d(\omega,\cdot)$
  are continuous and $c(\cdot,x)$, $d(\cdot,x)$ are measurable)
defined as
\begin{equation*}
c(\omega,\cdot):C(\omega)\rightarrow C(\theta \omega),\quad
d(\omega,\cdot):D(\omega)\rightarrow D(\theta \omega).
\end{equation*} Assume that $c$ and $d$ preserve base-points.
Two random pointed spaces $(C, c)$ and $(D, d)$ are called\textit{
random shift equivalent} and denoted by $(C, c) \sim  (D, d)$, if
there exist random maps $r(\omega,\cdot):C(\omega) \rightarrow
D(\theta_{n_1} \omega), s(\omega,\cdot): D(\omega)\longrightarrow
C(\theta_{n_2}\omega)$ with measurable $n_1 = n_1(\omega)$ and $n_2
= n_2(\omega)$ such that they preserve base points and the following
diagrams are quasi-commutative:
\begin{eqnarray}\label{2.4}
\begin{CD}
 C(\omega) @>{\rm c(\omega,\cdot)}>> C(\theta\omega) \\
@V{r(\omega,\cdot)}V V@VV{r(\theta\omega,\cdot)}V \\
D(\theta_1\omega ) @>>{\rm d({\theta_{n_1}}(\omega)},\cdot)>
D(\theta_\ast \omega)
\end{CD}
\end{eqnarray}
\begin{eqnarray}\label{2.5}
\begin{CD}
 D(\omega) @>{\rm d(\omega,\cdot)}>> D(\theta\omega) \\
@V{s(\omega,\cdot)}V V@VV{s(\theta\omega,\cdot)}V \\
C(\theta_2\omega ) @>>{\rm c({\theta_{n_2}}(\omega),\cdot)}>
C(\theta_\ast \omega)
\end{CD}
\end{eqnarray}
and the following holds:
\begin{eqnarray}
  r(\theta_{n_2}\omega,s(\omega,\cdot))=d^{n_2(\omega)+n_1(\theta_{n_2}\omega)}(\omega,\cdot), \label{rs}\\
 s(\theta_{n_1}\omega,r(\omega,\cdot))=c^{n_1(\omega)+n_2(\theta_{n_1}\omega)}(\omega,\cdot).  \label{sr}
\end{eqnarray}

 Here  ``quasi-commutative'' in the diagram \eqref{2.4}  is in the sense
that
\begin{eqnarray}\label{2.3}
\end{eqnarray}
\begin{eqnarray*} \left\{ \begin{array}{l}
r(\theta\omega,c(\omega,\cdot))=d^{n_1(\theta\omega)-n_1(\omega)}(\theta_{n_1(\omega)+1}\omega,
d(\theta_{n_1(\omega)}\omega,r(\omega,\cdot))), n_1(\theta\omega)\geq n_1(\omega),\\
d^{n_1(\omega)-n_1(\theta\omega)}(\theta_{n_1(\theta\omega)+1}\omega,r(\theta\omega,c(\omega,\cdot)))=d(\theta_{n_1(\omega)}\omega,r(\omega,\cdot)),
 n_1(\theta\omega)< n_1(\omega).\\
 \end{array} \right.
\end{eqnarray*}
In short, equations \eqref{2.3} is written as
\begin{eqnarray*}
r \circ  c = d^\bigtriangleup \circ d \circ r, \quad
d^\bigtriangleup \circ r \circ c = d \circ r,
\end{eqnarray*}
where $\bigtriangleup$ denote the adjustment that is understood as
in \eqref{2.3}. Similarly we interpret the diagram in \eqref{2.5}.
Moreover, the \textit{subscript} $_*$ in \eqref{2.4} and \eqref{2.5}
denotes $\max \{n_1(\omega)+1,n_1(\theta\omega)+1\}$ and $\max
\{n_2(\omega)+1,n_2(\theta\omega)+1\}$, respectively.


It can be proved that the shift equivalence is an equivalence
relation. If $P=(N(\omega),L(\omega))$ and
$P'=(N'(\omega),L'(\omega))$ are two random filtration pairs for
$S(\omega)$, then the induced random maps, $\phi_P$ and $\phi_{P'}$
on the corresponding random pointed spaces, are random shift
equivalent.

 Let $f$ and $g$ be   Caratheodory functions. Then  $f$ is called random
homotopic to $g$, denoted by $f\simeq  g$, if there exists a map $H
: [0, 1]\times \Omega\times C(\omega)\rightarrow C(\theta_n\omega)$
such that $H(\cdot,\omega,\cdot)$ is continuous, $H(\lambda, \cdot,
x) $ is measurable, and
\begin{displaymath}
\left\{ \begin{array}{ll}
H(0,\cdot,\cdot)=f(\cdot,\cdot),\\
H(1,\cdot,\cdot)=g(\cdot,\cdot).\\
\end{array} \right.
\end{displaymath}
Denote $[f]$ the random homotopy class with $f$ the representative
element. The random homotopy equivalence classes $(C, [c])$ and $(D,
[d])$ are called random shift equivalent if there exist random
homotopy classes $[r(\omega)]: C(\omega) \rightarrow
D(\theta_{n_1}\omega)$ and $[s(\omega)]: D(\omega)\rightarrow
C(\theta_{n_2}\omega)$, where $n_i=n_i(\omega) (i=1,2)$ are
measurable, such that
\begin{eqnarray*}
 [r\circ c]=[d^\bigtriangleup \circ d \circ r],\quad [s\circ
d]=[c^\bigtriangleup \circ c\circ s],\quad
 [r\circ s]=[d^*],\quad
[s\circ r]=[c^*],
\end{eqnarray*}
where the \textit{superscript} $^*$  in the latter two equations
denotes $n_2(\omega)+n_1(\theta_{n_2}\omega)$ and
$n_1(\omega)+n_2(\theta_{n_1}\omega)$, respectively (see \eqref{rs}
and \eqref{sr}).

\begin{definition}[Random Conley index \cite{Liu}]
  Assume that $\varphi$ is the time-one map of a discrete random dynamical system, $S(\omega)$ is a
random isolated invariant set for $\varphi$ and $P =
(N(\omega),L(\omega))$ is a random filtration pair for $S(\omega)$.
Let $h_P (S(\omega),\varphi)$ be the random homotopy class
$[\varphi_P]$ on the random pointed space $N_L(\omega)$ with
$\varphi_P$ a representative element. Then   the random shift
equivalent class of $h_P(S(\omega),\varphi)$, denoted by
$h(S(\omega),\varphi)$, is defined as the random Conley index for
$S(\omega)$.
\end{definition}

 We present a few
  properties of the random Conley index. These will be needed
  in the following sections.

\begin{lemma}\label{2.1}
  If $S_1(\omega)$ and $S_2(\omega)$ are disjoint random isolated invariant
  sets, then $S(\omega)=S_1(\omega)\bigcup S_2(\omega)$ is a random
  isolated invariant set.
\end{lemma}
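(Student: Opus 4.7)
The plan is to separate $S_1(\omega)$ and $S_2(\omega)$ by disjoint random isolating neighborhoods and then shrink them further so that the time-one map $\varphi$ never ``jumps'' between the two pieces; the union will then be an isolating neighborhood for $S_1(\omega)\cup S_2(\omega)$.

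First, I would start from random isolating neighborhoods $N_1^0(\omega)$ and $N_2^0(\omega)$ provided by the hypothesis, so that $S_i(\omega)=Inv\,N_i^0(\omega)\subset int\,N_i^0(\omega)$ for $i=1,2$. Since $S_1(\omega)$ and $S_2(\omega)$ are disjoint random compact sets in the locally compact metric state space, they can be separated by disjoint random open sets; using the Caratheodory distance functions $d(x,S_i(\omega))$ this produces smaller random isolating neighborhoods $N_i(\omega)\subset N_i^0(\omega)$ that are pairwise disjoint. Because any orbit remaining in $N_i(\omega)$ for all time also remains in $N_i^0(\omega)$ for all time, the identity $Inv\,N_i(\omega)=S_i(\omega)$ is preserved under shrinking, and the strict containment $S_i(\omega)\subset int\,N_i(\omega)$ holds by construction.

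Second, since $\varphi(\omega,\cdot)$ is a homeomorphism with $\varphi(\omega,S_i(\omega))=S_i(\theta\omega)$ and the two targets are disjoint, I would further shrink the $N_i$, by continuity of $\varphi$, to arrange the non-jumping conditions $\varphi(\omega,N_1(\omega))\cap N_2(\theta\omega)=\emptyset$ and $\varphi(\omega,N_2(\omega))\cap N_1(\theta\omega)=\emptyset$ for every $\omega$. Setting $N(\omega):=N_1(\omega)\cup N_2(\omega)$, the cocycle property then forces any full orbit remaining in $N(\cdot)$ to stay in a single component: if $\phi(n,\omega,x)\in N_1(\theta_n\omega)$, then $\phi(n+1,\omega,x)=\varphi(\theta_n\omega,\phi(n,\omega,x))\in N(\theta_{n+1}\omega)\setminus N_2(\theta_{n+1}\omega)=N_1(\theta_{n+1}\omega)$, and an analogous argument using $\varphi^{-1}$ handles the backward orbit. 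Consequently $Inv\,N(\omega)=Inv\,N_1(\omega)\cup Inv\,N_2(\omega)=S_1(\omega)\cup S_2(\omega)$.

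Finally, I would verify the isolation condition. The set $N(\omega)$ is random compact (union of two such), and because $N_1(\omega)\cap N_2(\omega)=\emptyset$ its interior splits as $int\,N(\omega)=int\,N_1(\omega)\cup int\,N_2(\omega)$, which contains $S_1(\omega)\cup S_2(\omega)=Inv\,N(\omega)$. Hence $N(\omega)$ is a random isolating neighborhood, and $S(\omega)=S_1(\omega)\cup S_2(\omega)$ is the associated random isolated invariant set. I expect the main obstacle to be measurability: both the separation and the non-jumping step require the shrunken $N_i(\omega)$ to depend measurably on $\omega$, and the second step even couples $\omega$ with $\theta\omega$. This should be handled by explicit constructions with the Caratheodory distance functions rather than any deeper idea, but it is where the technical care lies.
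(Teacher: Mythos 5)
Your proof is correct and rests on the same underlying idea as the paper's: make the two neighborhoods disjoint and arrange that a full orbit staying in their union can never jump from one piece to the other, so that $Inv(N_1\cup N_2)=Inv\,N_1\cup Inv\,N_2$. The difference is in how the non-jumping is enforced. You shrink the $N_i$ twice --- once to separate them, once by continuity of $\varphi$ to force $\varphi(\omega,N_1(\omega))\cap N_2(\theta\omega)=\emptyset$ and symmetrically --- which leaves you with the measurable-selection problem you flag (a measurable choice of shrinking radius coupling $\omega$ with $\theta\omega$; it is solvable, e.g.\ via $r(\omega)=\tfrac12 d\bigl(S_1(\omega),\,N_1(\omega)\cap\varphi^{-1}(\theta\omega,N_2(\theta\omega))\bigr)$, but it is real work). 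The paper instead starts from \emph{disjoint random isolating blocks} $N_1,N_2$ and passes to the one-step squeeze $N_i'(\omega)=\varphi(\theta_{-1}\omega,N_i(\theta_{-1}\omega))\cap N_i(\omega)\cap\varphi^{-1}(\theta\omega,N_i(\theta\omega))$: by construction every point of $N_1'(\omega)$ maps into $N_1(\theta\omega)$, which is disjoint from $N_2'(\theta\omega)\subset N_2(\theta\omega)$, so non-jumping is automatic and the sets $N_i'$ are manifestly measurable, being built from measurable ingredients by intersections and images under the random homeomorphism. The trade-off is that the paper's route presupposes that disjoint isolated invariant sets admit disjoint isolating \emph{blocks}, whereas yours needs only isolating neighborhoods; either way the lemma follows.
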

\begin{proof}
Since $S_1(\omega)$ and $S_2(\omega)$ are random isolated invariant
sets, there exist disjoint random isolating blocks $N_1(\omega)$ and
$N_2(\omega)$ such that $S_1(\omega)=Inv N_1(\omega)$ and
$S_2(\omega)=Inv N_2(\omega)$. Let
$N'_1(\omega):=\varphi(\theta_{-1}\omega,N_1(\theta_{-1}\omega))\bigcap
N_1(\omega)\bigcap \varphi^{-1}(\theta\omega,N_1(\theta\omega))$ and
$N'_2(\omega):=\varphi(\theta_{-1}\omega,N_2(\theta_{-1}\omega))\bigcap
N_2(\omega)\bigcap \varphi^{-1}(\theta\omega,N_2(\theta\omega))$.
Then $N'_1(\omega)$ and $N'_2(\omega)$ are disjoint random isolating
neighborhoods by the invariance of $S_1(\omega)$ and $S_2(\omega)$.
We can check that $N(\omega):=N_1'(\omega)\bigcup N_2'(\omega)$ is a
random isolating neighborhood for $S(\omega)$.
\end{proof}

\begin{lemma}\label{2.2}
  If the random dynamical systems $\phi_1$ and $\phi_2$ are
  topologically equivalent and $S(\omega)$ is a random isolated invariant
  set with respect to $\phi_1$, then $ \alpha(\omega,S(\omega))$
  is a random isolated invariant set with respect to $\phi_2$ and $$h(S(\omega),\phi_1)=
  h(\alpha(\omega,S(\omega)),\phi_2).$$
\end{lemma}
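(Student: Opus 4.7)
The plan is to transport every structure used to build the random Conley index of $S(\omega)$ under $\phi_1$ across the random homeomorphism $\alpha$, and then to exploit the cohomology identity $\phi_2(t,\omega,\alpha(\omega,x))=\alpha(\theta_t\omega,\phi_1(t,\omega,x))$ to identify the induced pointed-space maps up to a strict conjugation, from which random shift equivalence follows immediately.

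First I would show that $S'(\omega):=\alpha(\omega,S(\omega))$ is a random isolated invariant set for $\phi_2$. Pick a random isolating neighborhood $N(\omega)$ with $\mathrm{Inv}\,N(\omega)=S(\omega)$, and set $N'(\omega):=\alpha(\omega,N(\omega))$. Since $x\mapsto\alpha(\omega,x)$ is a homeomorphism for every $\omega\in\Omega$, $N'(\omega)$ is compact, $\alpha(\omega,\mathrm{int}\,N(\omega))=\mathrm{int}\,N'(\omega)$, and the measurability conditions (ii) in the definition of topological equivalence yield measurability of $N'$. The cohomology relation gives $\phi_2(n,\omega,y)\in N'(\theta_n\omega)$ iff $\phi_1(n,\omega,\alpha^{-1}(\omega,y))\in N(\theta_n\omega)$, so $\mathrm{Inv}\,N'(\omega)=\alpha(\omega,\mathrm{Inv}\,N(\omega))=S'(\omega)\subset\mathrm{int}\,N'(\omega)$, as required.

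Next I would transport a random filtration pair $P=(N(\omega),L(\omega))$ for $S(\omega)$ to $P'=(\alpha(\omega,N(\omega)),\alpha(\omega,L(\omega)))$ and check conditions (i)–(iii) of Definition~1.1. The homeomorphism property of $\alpha(\omega,\cdot)$ preserves $\mathrm{cl}(\mathrm{int}\,\cdot)$, complements and intersections, so $\mathrm{cl}(N'(\omega)\setminus L'(\omega))=\alpha(\omega,\mathrm{cl}(N(\omega)\setminus L(\omega)))$ is a random isolating neighborhood of $S'(\omega)$ by the argument of the previous paragraph. The exit set transforms as $N'^{-}(\omega)=\alpha(\omega,N^{-}(\omega))$ by the cohomology identity applied at time $t=1$, giving (ii). The same identity yields $\varphi_2(\omega,L'(\omega))=\alpha(\theta\omega,\varphi_1(\omega,L(\omega)))$, which is disjoint from $\mathrm{cl}(N'(\theta\omega)\setminus L'(\theta\omega))=\alpha(\theta\omega,\mathrm{cl}(N(\theta\omega)\setminus L(\theta\omega)))$ by injectivity of $\alpha(\theta\omega,\cdot)$ and condition (iii) for $P$.

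Finally, the quotient maps intertwine with $\alpha$: the induced $\bar{\alpha}(\omega,\cdot)\colon N_L(\omega)\to N'_{L'}(\omega)$ is a pointed homeomorphism, and the definition of $\varphi_P$ together with the cohomology identity give the strict commutation
\begin{equation*}
\bar{\alpha}(\theta\omega,\varphi_{1,P}(\omega,x))=\varphi_{2,P'}(\omega,\bar{\alpha}(\omega,x)).
\end{equation*}
Taking $r(\omega,\cdot):=\bar{\alpha}(\omega,\cdot)$, $s(\omega,\cdot):=\bar{\alpha}^{-1}(\omega,\cdot)$, $n_1\equiv n_2\equiv 0$, the shift equivalence diagrams \eqref{2.4}–\eqref{2.5} and the relations \eqref{rs}–\eqref{sr} hold trivially, so $h_{P}(S(\omega),\phi_1)=h_{P'}(S'(\omega),\phi_2)$ at the level of random homotopy classes, and hence the shift-equivalent classes $h(S(\omega),\phi_1)$ and $h(S'(\omega),\phi_2)$ coincide. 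The only delicate step is the measurability book-keeping in transferring $N$ and $L$ through $\alpha$; this is taken care of by property (ii) of topological equivalence, and everything else reduces to routine homeomorphism arguments combined with the cohomology identity.
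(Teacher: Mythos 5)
Your proposal is correct and follows essentially the same route as the paper: transport the isolating neighborhood and the filtration pair through $\alpha$ using the cohomology identity, then conjugate the induced pointed-space maps by the induced quotient homeomorphism (your $\bar{\alpha}$ is exactly the paper's $r(\omega,\cdot)=p_2(\omega,\alpha(\omega,\cdot))$, with $s$ its inverse and $n_1=n_2=0$). No substantive differences.
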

\begin{proof}    Since $S(\omega)$ is a random isolated invariant
  set with respect to $\phi_1$, there exists a random isolating
  neighborhood $N(\omega)$ such that $S(\omega)=Inv N(\omega)$.
  Note
    that $\alpha(\omega,N(\omega))$ is a random isolating
    neighborhood. We now show that
    $\alpha(\omega,S(\omega))=Inv\alpha(\omega, N(\omega))$.
   In fact, if $x\in
  S(\omega)$, by definition, we have $\phi_1(n,\omega,x)\in
  N(\theta_n\omega)$, $\forall n \in \mathbb{Z}$. So $\alpha(\theta_n\omega,\phi_1(n,
  \omega,x))\in \alpha(\theta_n\omega,N(\theta_n\omega))$, $\forall n \in
  \mathbb{Z}$. That is $$\phi_2(n,\omega,\alpha(\omega,x))\in
  \alpha(\theta_n\omega,N(\theta_n\omega)).$$ We have $\alpha(\omega,S(\omega))\subset Inv\alpha(\omega, N(\omega))$.
 Similarly we can show the opposite inclusion.

  Suppose $P=(N(\omega),L(\omega))$ is a random filtration pair for
  $S(\omega)$. We check that
  $P'=(\alpha(\omega,N(\omega)),\alpha(\omega,L(\omega)))$ is a
  random filtration pair for $\alpha(\omega,S(\omega))$
  as follows. (i) If $x\in Inv \alpha(\omega,cl(N(\omega)\setminus
  L(\omega)))$, then there exists $y\in cl(N(\omega)\setminus
  L(\omega))$ such that $x=\alpha(\omega,y)$ and $\phi_2(n,\omega,x)=\alpha(\theta_n\omega,\phi_1(n,\omega,y))\in \alpha(\theta_n\omega,
  cl(N(\theta_n\omega)\setminus L(\theta_n\omega)))$, and
  we have $\phi_1(n,\omega,y)\in cl(N(\theta_n\omega)\setminus
  L(\theta_n\omega))$.
  Thus $y\in Inv cl(N(\omega)\setminus L(\omega))$. Using the fact that $cl(N(\omega)\setminus
  L(\omega))$ is a random isolating neighborhood of $S(\omega)$, we
  have $y\in S(\omega)$. Hence $\alpha(\omega,y)\in \alpha(\omega,
  S(\omega))$. It follows that $$Inv \alpha(\omega,cl(N(\omega)\setminus L(\omega)))\subset
  \alpha(\omega,S(\omega)).$$ Similarly we can prove $
  \alpha(\omega,S(\omega))\subset Inv \alpha(\omega,cl(N(\omega)\setminus
  L(\omega)))$. We conclude that $\alpha(\omega,cl(N(\omega)\setminus L(\omega)))$
  is a random isolating neighborhood of $\alpha(\omega,S(\omega))$.
  (ii) Denote $\alpha^-(\omega,N(\omega))$ the exit set of
  $\alpha(\omega,N(\omega))$. Then
  $\alpha^-(\omega,N(\omega))=\alpha(\omega,N^-(\omega))$.
It is easy to see that  $\alpha(\omega,L(\omega))$ is a random
  isolating
  neighborhood of $\alpha^-(\omega,N(\omega))$. (iii) Note that $$\alpha(\theta\omega,\varphi_1(\omega,L(\omega)))
  =\varphi_2(\omega,\alpha(\omega,L(\omega)))$$ and
   $$\varphi_1(\omega,L(\omega))\bigcap cl(N(\theta\omega)\backslash L(\theta\omega
  ))=\emptyset. $$So $$ \alpha(\theta\omega,\varphi_1(\omega,L(\omega)))\bigcap \alpha(\theta\omega, cl(N(\theta\omega)\backslash
  L(\theta\omega)))=\emptyset, $$i.e., $ \varphi_2(\omega,\alpha(\omega,L(\omega)))\bigcap cl(\alpha(\theta\omega, N(\theta\omega))
  \backslash
 \alpha(\theta\omega, L(\theta\omega)))=\emptyset$.

Denote $\alpha_L(\omega,N(\omega))$ the random quotient space
$\alpha(\omega,N(\omega))/ \alpha(\omega,L(\omega))$. Let
\begin{eqnarray*}
\varphi_{2 P'}(\omega,\cdot)=\left\{ \begin{array}{ll}
[\alpha(\theta\omega,L(\theta\omega))],& x=[\alpha(\omega,L(\omega))]  \mbox{ or }  \varphi_2(\omega,x)\notin \alpha(\theta\omega,N(\theta\omega)),\\
p_2(\theta\omega,\varphi_2(\omega,x)),
&\mbox{otherwise},\\
\end{array} \right.
\end{eqnarray*}
where $p_2(\omega,\cdot):\alpha(\omega,N(\omega))\rightarrow
\alpha_L(\omega,N(\omega))$ is the quotient map.\\

Consider the following two maps:
$r(\omega,\cdot):N_L(\omega)\rightarrow\alpha_L(\omega,N(\omega)),$
\begin{eqnarray*}
 r(\omega,\cdot)=\left\{ \begin{array}{ll}
[\alpha(\omega,L(\omega))],& x=[L(\omega)],\\
p_2(\omega,\alpha(\omega,x)),
&\mbox{otherwise},\\
\end{array} \right.
\end{eqnarray*}
and
 $s(\omega,\cdot):\alpha_L(\omega,N(\omega))\rightarrow
N_L(\omega),$
\begin{eqnarray*}
s(\omega,\cdot)=\left\{ \begin{array}{ll}
[L(\omega)],& x=[\alpha(\omega,L(\omega))],\\
p(\omega,\alpha^{-1}(\omega,x)),
&\mbox{otherwise}.\\
\end{array} \right.
\end{eqnarray*}
We verify that $r(\omega,\cdot)$ and $s(\omega,\cdot)$ are
continuous, and $r(\cdot,x)$ and $s(\cdot,x)$ are measurable.
Moreover we have
\begin{eqnarray*}
  [r\circ \varphi_{1 P}]=[\varphi_{2 P'}\circ
 r], [s\circ \varphi_{2 P'}]=[ \varphi_{1 P}\circ
  s],
  [r\circ s]=[I], [s\circ r]=[I].
\end{eqnarray*}
Thus by the definition of random Conley index we conclude that
$$h(S(\omega),\phi_1)=
  h(\alpha(\omega,S(\omega)),\phi_2).$$
\end{proof}

\section{Prime random isolated invariant sets}

  We call $S(\omega)$ a \textit{prime random isolated invariant set} for $\varphi$ if
  $S(\omega)$ is a random  isolated invariant set for $\varphi$, and for
  any proper subset $S'(\omega)\subset S(\omega)$, $S'(\omega)\neq
  \emptyset$, $S'(\omega)$ is not a random isolated invariant set
  for $\varphi$. Suppose $\varphi$ has only finitely many prime random isolated
  invariant sets $S_1(\omega),S_2(\omega),\cdots, $ and $S_r(\omega)$, then
   $T_f(\omega)=\bigcup\limits_{i=1}^r S_i(\omega)$  is called an
 \textit{ extreme maximal random isolated invariant set }for $\varphi$.
For the deterministic case see \cite{Fu}.

\begin{lemma}\label{3.2}
  For any two different prime random isolated invariant sets $S_1(\omega)$ and $S_2(\omega)$,
we have $S_1(\omega)\bigcap S_2(\omega)=\emptyset$ for $\omega\in
\Omega$.
\end{lemma}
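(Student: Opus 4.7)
The plan is to argue by contradiction. Suppose $S_1(\omega)$ and $S_2(\omega)$ are two distinct prime random isolated invariant sets whose intersection is nonempty on some nontrivial set of $\omega$. The strategy is to show that $S_1(\omega)\cap S_2(\omega)$ is itself a random isolated invariant set, whence primality of both $S_1$ and $S_2$ will force the two sets to coincide, contradicting the hypothesis that they are different.

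First, since each $S_i(\omega)$ is a random isolated invariant set, there exist random isolating neighborhoods $N_i(\omega)$ such that $S_i(\omega)=\mathrm{Inv}\,N_i(\omega)$ and $S_i(\omega)\subset\mathrm{int}\,N_i(\omega)$ for $i=1,2$. I would set $N(\omega):=N_1(\omega)\cap N_2(\omega)$, which is a random compact set, and verify the two things needed:
\begin{eqnarray*}
\mathrm{Inv}\,N(\omega) &=& S_1(\omega)\cap S_2(\omega),\\
\mathrm{Inv}\,N(\omega) &\subset& \mathrm{int}\,N(\omega).
\end{eqnarray*}
The first identity follows directly from the definition of $\mathrm{Inv}$: a point $x$ lies in $\mathrm{Inv}\,N(\omega)$ iff $\phi(n,\omega,x)\in N_1(\theta_n\omega)\cap N_2(\theta_n\omega)$ for all $n\in\mathbb{Z}$, which is exactly the condition $x\in\mathrm{Inv}\,N_1(\omega)\cap\mathrm{Inv}\,N_2(\omega)$. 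The inclusion $S_1(\omega)\cap S_2(\omega)\subset\mathrm{int}\,N_1(\omega)\cap\mathrm{int}\,N_2(\omega)=\mathrm{int}\,N(\omega)$ is immediate from the isolating property of each $N_i$. Thus $N(\omega)$ is a random isolating neighborhood for $S_1(\omega)\cap S_2(\omega)$.

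Now invoke primality. Since $S_1(\omega)\cap S_2(\omega)$ is a nonempty random isolated invariant set contained in $S_1(\omega)$, the primality of $S_1$ rules out its being a proper subset, so $S_1(\omega)\cap S_2(\omega)=S_1(\omega)$, i.e.\ $S_1(\omega)\subset S_2(\omega)$. Symmetrically, $S_2(\omega)\subset S_1(\omega)$, so $S_1(\omega)=S_2(\omega)$, contradicting the assumption that they are distinct.

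I expect the only genuine subtlety to be measurability bookkeeping: one must check that $N(\omega)=N_1(\omega)\cap N_2(\omega)$ is indeed a random compact set and that the $\omega$-wise arguments above respect the random-set framework (in particular, ``different'' prime sets should be interpreted as $S_1(\omega)\neq S_2(\omega)$ on a set of positive measure, and the contradiction derived on that set). These are routine in the locally compact metric setting assumed in the paper, so the main content of the proof is the identity $\mathrm{Inv}(N_1\cap N_2)=S_1\cap S_2$ together with the primality dichotomy.
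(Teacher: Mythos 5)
Your proof is correct and follows essentially the same route as the paper's: intersect the two isolating neighborhoods, observe that $\mathrm{Inv}\,(N_1(\omega)\cap N_2(\omega))=\mathrm{Inv}\,N_1(\omega)\cap \mathrm{Inv}\,N_2(\omega)=S_1(\omega)\cap S_2(\omega)\subset \mathrm{int}\,N_1(\omega)\cap \mathrm{int}\,N_2(\omega)$, so the intersection is a random isolated invariant set, and then invoke primality. Your version is in fact slightly cleaner than the paper's (which detours through $\mathrm{Inv}\,S$ and an omega-limit-set fact, and asserts ``proper subset'' without addressing the case where the intersection equals one of the $S_i$, which you handle explicitly via $S_1(\omega)=S_2(\omega)$).
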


\begin{proof}
Assume  that $N_1(\omega)$ and $N_2(\omega)$ are random isolating
neighborhoods for $S_1(\omega)$ and $S_2(\omega)$ respectively. We
set $S(\omega)=S_1(\omega)\bigcap S_2(\omega)$ and $N(\omega) =
N_1(\omega)\bigcap N_2(\omega)$. If $ S(\omega)\neq \emptyset$ then
there exists $x$ such that $x\in S(\omega)$. We have $x \in
S_i(\omega) (i = 1, 2)$, which imply $\varphi^n(\omega,x) \in
S_1(\theta_n\omega) \bigcap S_2(\theta_n\omega)=S(\theta_n\omega),
n\in \mathbb{Z}$.  Since $S(\omega)$ is compact and forward
invariant, by the fact that $InvS(\omega)=\Omega_S(\omega)$, we have
\begin{equation*}
InvS(\omega)\neq \emptyset.
\end{equation*}
It is clear  that $InvS(\omega) \subset InvN(\omega)$. On the other
hand, for any $x \in InvN(\omega)$, since $N(\omega) =
N_1(\omega)\bigcap N_2(\omega)$, we have $\varphi^n(\omega,x) \in
N_1(\theta_n\omega) \bigcap N_2(\theta_n\omega), n\in \mathbb{Z}$,
which implies $\varphi^n(\omega,x) \in Inv
N_i(\theta_n\omega)=S_i(\theta_n\omega) (i=1,2)$. It follows that
  $x\in InvS(\omega)$. Furthermore, we have
\begin{eqnarray*}
&&InvS(\omega) = Inv(S_1(\omega)\bigcap S_2(\omega)) \subset
S_1(\omega) \bigcap S_2(\omega) \subset intN_1(\omega) \bigcap
intN_2(\omega)\\&&=int(N_1(\omega)\bigcap N_2(\omega)) =
intN(\omega).
\end{eqnarray*}
Thus $InvS(\omega)$ is a nonempty random isolated invariant (with
respect to $\varphi$) proper subset of $S_i(\omega) (i = 1, 2)$,
which is a contradiction. So $S(\omega) = \emptyset$, i.e.,
$S_1(\omega)$ and $S_2(\omega)$ are disjoint.
\end{proof}

 By Lemmas \ref{3.2} and   \ref{2.1}, we
see that an extreme maximal random isolated invariant set is indeed
a random isolated invariant set. As in \cite{Fu}, if there exists no
nonempty prime random isolated invariant set or if there are
infinitely many prime random isolated invariant sets, then we define
the extreme maximal random isolated invariant set for $\varphi$ by
$T_\varphi(\omega)= \emptyset$. Thus, for every random homeomorphism
$\varphi$, there exists a unique extreme maximal random random
isolated invariant set.

\section{Main result} \label{main}

Before   presenting our main result, we recall the definition for
abstract bifurcation points\cite{Arn}.

\begin{definition} [Abstract bifurcation point]
  Let $ \varphi_\lambda $ be a family of  RDS
  on $X$, parameterized by $\lambda\in R^k$. A parameter value $\lambda_0$ is called an abstract
  bifurcation point of the family if the family is not structurally
  stable at $\lambda_0$, i.e., if in any neighborhood of $\lambda_0$
  there is a parameter value  $\lambda$ such that $\varphi_\lambda$ and
  $\varphi_{\lambda_0}$ are not topologically equivalent.
\end{definition}

For the extreme maximal random isolated invariant set
$T_{\varphi_\lambda}(\omega)$, we use
$M(T_{\varphi_\lambda})(\omega)$ to denote the cardinal number of
the set $\{S(\omega)\mid S(\omega)$ is a prime random isolated
invariant set for $\varphi_\lambda\}$. If
$T_{\varphi_\lambda}(\omega)=\emptyset$, we set
$M(T_{\varphi_\lambda})(\omega)=0$.  The main result of this paper
is based on the  following Lemma.

\begin{lemma}\label{3.1}
  Suppose $\varphi_\lambda $ and $\varphi_{{\lambda_0}}$ are topologically equivalent
   under $\alpha$. Let $T_{\varphi_\lambda}(\omega)$ and $T_{\varphi_{\lambda_0}}(\omega)$
    be the extreme maximal random isolated
invariant sets for $\varphi_\lambda $ and $\varphi_{\lambda_0}$
respectively. Then
$M(T_{\varphi_\lambda})(\omega)=M(T_{\varphi_{\lambda_0}})(\omega)$
and $\alpha(\omega,
T_{\varphi_\lambda}(\omega))=T_{\varphi_{\lambda_0}}(\omega)$.
\end{lemma}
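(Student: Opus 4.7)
The plan is to leverage Lemma \ref{2.2} directly: topological equivalence, via $\alpha$, takes random isolated invariant sets of $\varphi_\lambda$ to random isolated invariant sets of $\varphi_{\lambda_0}$. The main task is then to promote this correspondence from arbitrary random isolated invariant sets to \emph{prime} ones, and finally to the extreme maximal one.

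First I would show that $\alpha(\omega,\cdot)$ sets up a bijection between the prime random isolated invariant sets of $\varphi_\lambda$ and those of $\varphi_{\lambda_0}$. The $\Rightarrow$ direction is the heart of the argument: let $S(\omega)$ be prime for $\varphi_\lambda$. By Lemma \ref{2.2}, $\alpha(\omega,S(\omega))$ is a random isolated invariant set for $\varphi_{\lambda_0}$. Suppose, for contradiction, that it fails to be prime, so there is a nonempty proper random isolated invariant subset $S'(\omega)\subsetneq\alpha(\omega,S(\omega))$ for $\varphi_{\lambda_0}$. Applying Lemma \ref{2.2} to the inverse topological equivalence $\alpha^{-1}$, the preimage $\alpha^{-1}(\omega,S'(\omega))$ is a random isolated invariant set for $\varphi_\lambda$. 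Because $\alpha(\omega,\cdot)$ is a bijection on fibers, $\alpha^{-1}(\omega,S'(\omega))$ is a nonempty proper subset of $S(\omega)$, contradicting primality of $S(\omega)$. The $\Leftarrow$ direction is symmetric, using $\alpha^{-1}$. Hence $\alpha(\omega,\cdot)$ induces a bijection on the collection of prime random isolated invariant sets, and therefore
\[
M(T_{\varphi_\lambda})(\omega)=M(T_{\varphi_{\lambda_0}})(\omega).
\]

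Next I would deduce the set-level identity. Enumerate the prime random isolated invariant sets of $\varphi_\lambda$ as $S_1(\omega),\dots,S_r(\omega)$ (when finitely many exist); by the previous step, $\alpha(\omega,S_1(\omega)),\dots,\alpha(\omega,S_r(\omega))$ is exactly the collection of prime random isolated invariant sets for $\varphi_{\lambda_0}$. Since $\alpha(\omega,\cdot)$ is a homeomorphism it commutes with unions, so
\[
\alpha(\omega,T_{\varphi_\lambda}(\omega))
=\alpha\!\left(\omega,\bigcup_{i=1}^r S_i(\omega)\right)
=\bigcup_{i=1}^r \alpha(\omega,S_i(\omega))
=T_{\varphi_{\lambda_0}}(\omega).
\]
In the degenerate cases (no prime random isolated invariant sets, or infinitely many), the bijection under $\alpha$ forces the same alternative on both sides, so by the convention $T_{\varphi_\lambda}(\omega)=\emptyset=T_{\varphi_{\lambda_0}}(\omega)$ and the equality $\alpha(\omega,\emptyset)=\emptyset$ is trivial.

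The step I expect to be the main obstacle is the contradiction argument establishing that $\alpha$ preserves primality. It is conceptually obvious because $\alpha(\omega,\cdot)$ is a fiberwise homeomorphism, but one must be careful that $\alpha^{-1}(\omega,S'(\omega))$ is genuinely a \emph{random} isolated invariant set (i.e.\ that measurability of $S'(\omega)$ transfers through $\alpha^{-1}$, which is guaranteed by the measurability requirement (ii) in the definition of topological equivalence) and that ``proper subset'' is really preserved under the fiberwise bijection. Once that is cleanly in hand, the enumeration and the union identity are essentially bookkeeping.
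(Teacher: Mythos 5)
Your proposal is correct and follows essentially the same route as the paper: invoke Lemma \ref{2.2} to transport random isolated invariant sets through $\alpha$, run the contradiction argument with $\alpha^{-1}$ to show primality is preserved in both directions, and then take the union of the primes to get the identity for the extreme maximal sets. If anything, you are slightly more careful than the published proof in spelling out the degenerate case of infinitely many prime sets, which the paper passes over silently.
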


\begin{proof}
  We first consider the   case when   $T_{\varphi_\lambda}(\omega)\neq \emptyset$. Then there exists
  an integer $r > 0$ such that $T_{\varphi_\lambda}(\omega)=\bigcup
  \limits_{i=1}^r S_i(\omega)$, where each $S_i(\omega)$ is a prime random isolated
  invariant set for $\varphi_\lambda $.  Since $\varphi_\lambda $ and $\varphi_{\lambda_0}$
 are topologically equivalent, by Lemma \ref{2.2}, for each $i_0 \in
\{1,\cdots,r\}$, $\alpha(\omega,S_{i_0}(\omega))$ is an isolated
invariant set of $\varphi_{\lambda_0}$. Suppose
$\alpha(\omega,S_{i_0}(\omega))$ is not prime random isolated set,
i.e., there exists nonempty proper subset $S_{i_0}'(\omega) \subset
\alpha(\omega,S_{i_0}(\omega))$ such that $S_{i_0}'(\omega)$ is a
random isolated invariant set. Then
$\alpha^{-1}(\omega,S_{i_0}'(\omega)) \subset S_{i_0}(\omega) $ is a
nonempty random isolated invariant set of $\varphi_\lambda$, which
contradicts with the fact that $S_{i_0}(\omega)$ is  a prime random
isolated invariant set. Thus $\alpha$ maps a prime random isolated
invariant set for  $\varphi_\lambda$ to a prime random isolated
invariant set for  $\varphi_{\lambda_0}$. Similarly, $\alpha^{-1}$
maps a prime random isolated invariant set for $\varphi_{\lambda_0}$
to a prime random isolated invariant set for $\varphi_\lambda$.
Therefore,
$M(T_{\varphi_\lambda})(\omega)=M(T_{\varphi_{\lambda_0}})(\omega)$
and $\alpha(\omega, T_{\varphi_\lambda}(\omega))=\alpha(\omega,
\bigcup
  \limits_{i=1}^r S_i(\omega))=\bigcup \limits_{i=1}^r\alpha(\omega,
 S_i(\omega))
  =T_{\varphi_{\lambda_0}}(\omega)$. When
  $T_{\varphi_\lambda}(\omega)=\emptyset$,
  we have $T_{\varphi_{\lambda_0}}(\omega)=\emptyset$. So we still have $\alpha(\omega, T_{\varphi_\lambda}
  (\omega))=T_{\varphi_{\lambda_0}}(\omega)$.
\end{proof}

This is our main result on bifurcation.
\begin{theorem}
[Bifurcation points for discrete-time  random dynamical
system]\label{3.4} Consider     a family of discrete-time RDS $
\varphi_\lambda $, parameterized by $\lambda\in R^k$. If for any
neighborhood $U$ containing $\lambda_0\in R^k$ there exists $\lambda
\in U$ such that $M(T_{\varphi_\lambda})(\omega)\neq
M(T_{\varphi_{\lambda_0}})(\omega)$ or
$h(T_{\varphi_{\lambda}}(\omega), \varphi_{\lambda})\neq
h(T_{\varphi_{\lambda_0}}(\omega), \varphi_{\lambda_0})$, then
$\lambda_0$ is an abstract bifurcation point of $\varphi_\lambda$.
\end{theorem}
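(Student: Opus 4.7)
The plan is to prove this by contraposition, directly combining Lemma \ref{3.1} with Lemma \ref{2.2}. By the definition of an abstract bifurcation point, it suffices to show that whenever $\lambda \in U$ satisfies either of the two stated inequalities, the random dynamical systems $\varphi_\lambda$ and $\varphi_{\lambda_0}$ fail to be topologically equivalent. Once that implication is established, the hypothesis — that such a $\lambda$ exists in every neighborhood $U$ of $\lambda_0$ — yields directly that $\lambda_0$ is an abstract bifurcation point.

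First I would set up the contrapositive: assume for contradiction that $\varphi_\lambda$ and $\varphi_{\lambda_0}$ are topologically equivalent under some conjugacy $\alpha$. Then Lemma \ref{3.1} applies, giving simultaneously
\begin{equation*}
M(T_{\varphi_\lambda})(\omega) = M(T_{\varphi_{\lambda_0}})(\omega) \qquad \text{and} \qquad \alpha(\omega, T_{\varphi_\lambda}(\omega)) = T_{\varphi_{\lambda_0}}(\omega).
\end{equation*}
The first equality immediately rules out case (a) of the hypothesis. For case (b), I would invoke Lemma \ref{2.2} applied to the extreme maximal random isolated invariant set $S(\omega) = T_{\varphi_\lambda}(\omega)$, which is indeed a random isolated invariant set (by the combination of Lemmas \ref{3.2} and \ref{2.1} noted right after Lemma \ref{3.2}). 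Lemma \ref{2.2} then gives
\begin{equation*}
h(T_{\varphi_\lambda}(\omega), \varphi_\lambda) = h(\alpha(\omega, T_{\varphi_\lambda}(\omega)), \varphi_{\lambda_0}) = h(T_{\varphi_{\lambda_0}}(\omega), \varphi_{\lambda_0}),
\end{equation*}
where the second equality uses the identification from Lemma \ref{3.1}. This rules out case (b), contradicting the assumption that $\lambda$ satisfies one of the two inequalities.

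Hence, for every neighborhood $U$ of $\lambda_0$, the $\lambda \in U$ guaranteed by the hypothesis cannot be topologically equivalent to $\varphi_{\lambda_0}$, which is exactly the definition of $\lambda_0$ being an abstract bifurcation point. I do not anticipate a serious obstacle: the main content of the theorem has been pushed into Lemma \ref{3.1} (which handles the $M$-invariant) and Lemma \ref{2.2} (which handles the Conley-index invariant), so the proof of the theorem itself is essentially a one-line contrapositive combining these. The only minor care needed is to make sure the extreme maximal invariant set is itself a legitimate random isolated invariant set, so that Lemma \ref{2.2} is applicable — and this has already been observed in the paragraph following Lemma \ref{3.2}.
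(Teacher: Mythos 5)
Your proposal is correct and follows essentially the same route as the paper: argue by contradiction/contraposition, use Lemma \ref{3.1} to equate the counts $M$ and to identify $\alpha(\omega,T_{\varphi_\lambda}(\omega))$ with $T_{\varphi_{\lambda_0}}(\omega)$, then apply Lemma \ref{2.2} to conclude equality of the Conley indices. Your extra remark that the extreme maximal set is itself a random isolated invariant set (via Lemmas \ref{3.2} and \ref{2.1}) is a point the paper makes only implicitly, but it is the same argument.
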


\begin{proof}
Suppose $\lambda_0$ is not an abstract bifurcation point, then there
exists a neighborhood $U$ containing $\lambda_0$ such that for each
$\lambda\in U$, there exists a homeomorphism $\alpha_\lambda$ on $X$
such that $\varphi_\lambda$ and
  $\varphi_{\lambda_0}$ are  topologically equivalent.

  From Lemma \ref{3.1}, we have $M(T_{\varphi_\lambda})(\omega)=
  M(T_{\varphi_{\lambda_0}})(\omega)$ and
$\alpha_\lambda(\omega, T_{\varphi_{\lambda}}(\omega))=
T_{\varphi_{\lambda_0}}(\omega)$. By Lemma \ref{2.2}, we have
$h(T_{\varphi_{\lambda}}(\omega), \varphi_{\lambda})= $
$h(T_{\varphi_{\lambda_0}}(\omega), \varphi_{\lambda_0})$, a
contradiction. So $\lambda_0$ is an abstract bifurcation point of
$\varphi_\lambda$.
\end{proof}

Although the Conley index for continuous-time RDS (i.e., the time
set $\mathbb{T}= \mathbb{R}$) is not available at present, we can
still apply   the Conley index for discrete-time RDS to detect
abstract bifurcation points of continuous-time  RDS. Namely we have
the following bifurcation result.

\begin{coro}
[Bifurcation points for continuous-time  RDS] \label{bifn2}
  Consider     a family of continuous-time RDS $ \varphi_\lambda $,
parameterized by $\lambda\in R^k$. If  for any neighborhood $U$
containing $\lambda_0\in R^k$ there exists $\lambda \in U$ such that
$M(T_{\varphi_\lambda})(\omega)\neq
M(T_{\varphi_{\lambda_0}})(\omega)$ or
$h(T_{\varphi_\lambda}(\omega), \varphi_\lambda)\neq
h(T_{\varphi_{\lambda_0}}(\omega), \varphi_{\lambda_0})$ for the
corresponding discrete-time RDS $\varphi_\lambda(n,\omega,$ $x)$,
$n\in \mathbb{Z}$, then $\lambda_0$ is an abstract  bifurcation
point for the original family of continuous-time RDS
$\varphi_\lambda$.
\end{coro}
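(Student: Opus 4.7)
The plan is to prove the contrapositive and reduce the statement to Theorem~\ref{3.4} applied to the time-one cocycle. Assume $\lambda_0$ is \emph{not} an abstract bifurcation point of the continuous-time family. Then by the definition given above there is a neighborhood $U$ of $\lambda_0$ such that, for every $\lambda\in U$, there exists $\alpha_\lambda:\Omega\times X\to X$ providing a topological equivalence of the continuous-time RDS $\varphi_\lambda$ and $\varphi_{\lambda_0}$; in particular the cohomology identity
\begin{equation*}
\varphi_\lambda(t,\omega,\alpha_\lambda(\omega,x))=\alpha_\lambda(\theta_t\omega,\varphi_{\lambda_0}(t,\omega,x))
\end{equation*}
holds for every $t\in\mathbb{R}$, every $\omega\in\Omega$, and every $x\in X$.

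The crucial step is to restrict the time parameter to $t=n\in\mathbb{Z}$. Since the homeomorphism and measurability conditions on $\alpha_\lambda$ are independent of $t$, the same $\alpha_\lambda$ witnesses a topological equivalence between the discrete-time cocycles $\varphi_\lambda(n,\omega,\cdot)$ and $\varphi_{\lambda_0}(n,\omega,\cdot)$ in the sense of Section~2. With discrete-time equivalence in hand, Lemma~\ref{3.1} yields
\begin{equation*}
M(T_{\varphi_\lambda})(\omega)=M(T_{\varphi_{\lambda_0}})(\omega) \quad \text{and}\quad \alpha_\lambda(\omega,T_{\varphi_\lambda}(\omega))=T_{\varphi_{\lambda_0}}(\omega);
\end{equation*}
since an extreme maximal random isolated invariant set is itself random isolated invariant (by Lemmas~\ref{3.2} and~\ref{2.1}), Lemma~\ref{2.2} then delivers $h(T_{\varphi_\lambda}(\omega),\varphi_\lambda)=h(T_{\varphi_{\lambda_0}}(\omega),\varphi_{\lambda_0})$ for every $\lambda\in U$. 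This contradicts the hypothesis of the corollary and closes the argument.

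The only real obstacle is conceptual rather than technical: the random Conley index $h$ was constructed only for discrete-time RDS, so one must be comfortable using it as a bifurcation detector for the continuous-time family by reading it off the time-one cocycle. Once the restriction-of-time principle in the preceding paragraph is granted, the proof amounts to a pullback of Theorem~\ref{3.4} along the inclusion $\mathbb{Z}\hookrightarrow\mathbb{R}$ of time sets, and no new estimates or constructions should be required.
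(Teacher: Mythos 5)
Your proposal is correct and follows the same route as the paper: the paper's (very terse) proof likewise reduces the corollary to Theorem~\ref{3.4} via the observation that the discrete-time RDS is generated by the time-one map, which is exactly your restriction-of-time step. You have simply unwound the contrapositive argument that the paper leaves implicit.
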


\begin{proof}
Note that the discrete-time RDS $\varphi_\lambda(n,\omega,x) $ is
generated via iteration by the time-one map of the continuous-time
RDS $\varphi_\lambda$. This result thus follows from  Theorem
\ref{3.4} and the definition of abstract bifurcation point.
\end{proof}

We present two   examples to demonstrate the above bifurcation
results for RDS.

\begin{example}[Bifrucation in a discrete-time RDS]
 Consider a family of discrete-time RDS $\varphi_\lambda$ on  $\mathbb{R}$,
\begin{displaymath}
\varphi_\lambda(\omega,x) = \left\{ \begin{array}{ll}
x+x^2+\lambda\xi(\omega),& x\geq -\frac 1 2, \\
-\frac 1 2x+\lambda \xi(\omega), & x<-\frac 1 2.\\
\end{array} \right.
\end{displaymath}
 where $\lambda \in \mathbb{R} $ is a real parameter, and $ \xi(\omega)$ is a given positive
 random variable. There exists a prime random isolated
invariant set  for $\lambda \leq  0$. But for $\lambda>0$, there is
no  such a prime random isolated invariant set. So $\lambda = 0 $ is
an abstract bifurcation point for the discrete-time RDS
$\varphi_\lambda$ by Theorem \ref{3.4}.
\end{example}

We revise an example from \cite{Smo} to fit our purpose here.
\begin{example} [Bifrucation in a continuous-time RDS]
Consider a  family of scalar random differential equations
\begin{equation}\label{3.5}
  x'=f_\lambda(x,\theta_t\omega),
\end{equation}
parameterized by a real parameter $\lambda \in [-1, 1]$. Assume that
\eqref{3.5} generates a family of RDS $\varphi_\lambda (t, \omega,
x)$ and that there exists a unique fixed point $0$, $|\lambda|\leq
1$. Denote the corresponding discrete-time RDS by
$\varphi_\lambda(n,\omega,x)$. From Theorem 7.2 of  \cite{Liu} we
know that $\{0\}$ is the random isolated invariant set of
$\varphi_\lambda(n,\omega,x)$. If
$\lim\limits_{n\rightarrow\pm\infty}{\varphi_\lambda(n,\omega,x)}=0$
for $\lambda>0$, $x\in \mathbb{R}$, and $\lim\limits_{n\rightarrow
+\infty}{\varphi_\lambda(n,\omega,x)}=0$, $\lim\limits_{n\rightarrow
-\infty}{\varphi_\lambda(n,\omega,x)}= -\infty$ for $\lambda<0$,
$x\in \mathbb{R}$. Let $\b{0}$ be the random Conley index of random
pointed spaces consisting of just one random point with random
constant maps as their corresponding random pointed space maps. Then
the Conley index of $\{0\}$ for $\lambda>0$ is not equal to $\b{0}$,
but the Conley index of $\{0\}$ for $\lambda<0$ is $\b{0}$. We can
conclude that $ \lambda=0$ is the abstract bifurcation point of
continuous-time RDS $\varphi_\lambda$ by Corollary \ref{bifn2}.
\end{example}

\bibliographystyle{amsplain}

\end{document}